\newtheorem{tm}{Theorem}
\newtheorem{defi}{Definition}
\newtheorem{rem}{Remark}
\newtheorem{rems}{Remarks}
\newtheorem{ex}{Example}
\newtheorem{prop}{Proposition}
\newtheorem{nota}{Notation}
\newtheorem{quest}{Question}
\begin{document}
\title{Beyond Descartes' rule of signs}
\author{Vladimir Petrov Kostov}
\address{Universit\'e C\^ote d’Azur, CNRS, LJAD, France}
\email{vladimir.kostov@unice.fr}

\begin{abstract}
  We consider real univariate polynomials
  with all roots real. Such a polynomial with $c$ sign changes and $p$ sign
  preservations in the sequence of its coefficients has $c$ positive and $p$
  negative roots counted with multiplicity. Suppose that all moduli of roots
  are distinct; we consider them as ordered on the positive half-axis.
  We ask the question: If the
  positions of the sign changes are known, what can the positions of the
  moduli of negative roots be? We prove several new results which show how far
  from trivial the answer to this question is.

  {\bf Key words:} real polynomial in one variable; hyperbolic polynomial;
  sign pattern; Descartes' rule of signs\\

{\bf AMS classification:} 26C10
\end{abstract}
\maketitle

\section{Introduction}

In the present paper we study a problem related to a generalization
of Descartes' rule of signs formulated in
\cite{FoKoSh}. About this rule see \cite{Ca},
\cite{Cu}, \cite{DG}, \cite{Des}, \cite{Fo}, \cite{Ga}, 
\cite{J}, \cite{La} or~\cite{Mes}. For its tropical analog
see~\cite{FoNoSh}.

A degree $d$ real polynomial $Q:=\sum _{j=0}^da_jx^j$ is {\em hyperbolic} if
all its roots are real. Suppose that all coefficients $a_j$ are non-zero.
For such a polynomial, Descartes' rule of signs
implies that it has $c$ positive and $p$ negative roots (counted with
multiplicity, so $c+p=d$), where $c$ is the
number of sign changes and $p$ the number of sign preservations in the sequence
of coefficients of $Q$. The signs of these coefficients define the
{\em sign pattern} $({\rm sgn}(a_d)$, ${\rm sgn}(a_{d-1})$, $\ldots$,
${\rm sgn}(a_0))$. We deal mainly with monic polynomials in which case 
sign patterns begin with a~$+$. In this case we can use instead of
and equivalently to a sign pattern the corresponding 
{\em change-preservation pattern} which is a $d$-vector and (by some abuse
of notation) whose $j$th
component equals $c$ if $a_{d-j+1}a_{d-j}<0$ and $p$ if
$a_{d-j+1}a_{d-j}>0$.

One can consider also the moduli of the roots
of a hyperbolic polynomial defining a given sign pattern.
We study the generic case when all moduli are distinct. A natural question
to ask is:

\begin{quest}\label{quest1}
  When these moduli are ordered on the real positive half-axis, at 
  which positions can the moduli of the negative roots be?
  \end{quest}

Descartes' rule of signs provides no hint for the answer to this question.
In the present paper we recall known and we introduce new results in this
direction which show how far from trivial the situation is.

\begin{nota}
  {\rm (1) We denote by $0<\alpha _1<\cdots <\alpha _c$ the positive and by
    $0<\gamma _1<\cdots <\gamma _p$ the moduli of the negative roots of
    a hyperbolic polynomial. We explain the notation of the order of these
    moduli on the positive half-axis by an example. Suppose that
    $d=6$, $c=2$, $p=4$ and}
  $$\alpha _1<\gamma _1<\gamma _2<\alpha _2<\gamma _3<\gamma _4~.$$
  {\rm Then for the order of moduli we write $PNNPNN$, i.~e.
    the letters $P$ and $N$ denote the relative positions of the moduli of
    the positive and negative roots.
    \vspace{1mm}
    
    (2) A sign pattern beginning with $i_1$ signs $+$ followed by $i_2$
    signs $-$ followed by $i_3$ signs $+$ etc. is denoted by
    $\Sigma _{i_1,i_2,i_3,\ldots}$.}
\end{nota}

In what follows we consider for each given degree $d$
couples of the form (change-preservation pattern,
order of moduli) (called {\em couples} for short). Such a couple is
{\em compatible} with Descartes' rule of signs if the number of components
$c$ (resp. $p$) of the change-preservation pattern is equal to the number
of components $P$ (resp. $N$) of the order of moduli. A couple is called
{\em realizable} if there exists a polynomial defining the change-preservation
pattern of the couple and whose moduli of roots define the given order.

\begin{rem}\label{remchi}
  {\rm For fixed $d$ and $c$, there are ${d\choose c}$ change-preservation
    patterns and ${d\choose c}$ orders of moduli hence ${d\choose c}^2$
    compatible couples. Thus for a given degree $d$, the total number of
    compatible couples is}

  \begin{equation}\label{eqtotal}
    \chi (d):=\sum _{c=0}^d{d\choose c}^2=\sum _{c=0}^d{d\choose c}{d\choose d-c}=
         {2d\choose d}~.
         \end{equation}
  {\rm This is the coefficient of $x^d$ in the polynomial
    $(x+1)^d(x+1)^d=(x+1)^{2d}$. Using Stirling's formula
    $n!\sim \sqrt{2\pi n}(n/e)^n$ one concludes that
    $\chi (d)\sim 2^{2d}/\sqrt{\pi d}$.}
  \end{rem}

\begin{ex}\label{exd12}
  {\rm (1) For $d=1$, the only compatible couples are
    $(c,~P)$ and $(p,~N)$. They are realizable respectively by the polynomials
    $x-1$ and $x+1$.
    \vspace{1mm}
    
    (2) For $d=2$, there are ${4\choose 2}=6$ compatible couples.
    Out of these, the couples
    $(cp,~PN)$ and $(pc,~NP)$ are not realizable. Indeed, for a hyperbolic
    polynomial $x^2-ux-v$ (resp. $x^2+ux-v$), $u>0$, $v>0$, one has the order
    of moduli $NP$ (resp. $PN$). The remaining $4$ couples are realizable.
    To see this one can consider the family of polynomials $x^2+a_1x+a_0$. In
    the plane of the variables $(a_1,a_0)$ the domain of hyperbolic polynomials
    is the one below the parabola $\mathcal{P}:a_0=a_1^2/4$. We list the
    realizable couples and the open domains in which they are realizable:}

  $$\begin{array}{llll}
    (cc,~PP)&\{ a_1<0,~0<a_0<a_1^2/4\} ~,&
    (pp,~NN)&\{ a_1>0,~0<a_0<a_1^2/4\} ~,\\ \\
    (cp,~NP)&\{ a_1<0,~a_0<0\} ~,&(pc,~PN)&\{ a_1>0,~a_0<0\} ~.\end{array}$$
  \end{ex}

We can make Question~\ref{quest1} more precise:

\begin{quest}\label{quest2}
  For a given degree $d$, which compatible couples are realizable?
\end{quest}

The above example answers this question for $d=1$ and $2$. For
$d=3$, $4$ and $5$, the exhaustive answer is given in Section~\ref{secd345}.

\begin{rem}\label{remimir}
  {\rm There exist two commuting involutions acting on the set of
    degree $d$ polynomials with non-vanishing coefficients. These are}

  $$i_m~:~Q(x)\mapsto (-1)^dQ(-x)~~~\, \, \, {\rm and}~~~\, \, \, 
  i_r~:~Q(x)\mapsto x^dQ(1/x)/Q(0)~.$$
  {\rm The role of the factors $(-1)^d$ and $1/Q(0)$ is to preserve the set
    of monic polynomials. When acting on a couple, the involution $i_m$
    changes the components $c$ to $p$, $P$ to $N$ and vice versa while the
    involution $i_r$ reads the vectors of a given couple from the right.
    A given couple is realizable or not simultaneously with all other couples
    from its orbit under the action of $i_m$ and $i_r$. An orbit consists of
  four or two couples.}
\end{rem}

\begin{nota}\label{notaklr}
  {\rm For a sign pattern $\sigma$, we denote by $k^*(\sigma )$
    the number of orders of moduli with which $\sigma$ is realizable.
    For an order of moduli 
    $\Omega$, we denote by $l_*(\Omega )$ the number of sign patterns
    realizable with $\Omega$. For a given $d$, we denote by
    $\tilde{r}^*(d)$ the ratio between the numbers
  of realizable and of all compatible couples.}
\end{nota}

\begin{ex}
  {\rm (1) For the sign pattern $\Sigma _{3,3,1}$ one has $k^*(\Sigma _{3,3,1})=6$.
    Indeed, consider the polynomial}

  $$
(x-1)(x+1)^4(x-b)=
  x^6+(3-b)x^5+(2-3b)x^4+(-2b-2)x^3+(2b-3)x^2+(3b-1)x+b~.$$
  {\rm For $b>0$ sufficiently small, it defines the sign pattern
    $\Sigma _{3,3,1}$. One can perturb its $4$-fold root at $-1$ to obtain
    polynomials with the same sign pattern and with exactly $k$ moduli of
    negative roots which are $>1$ and $4-k$ moduli which are $<1$, where 
    $k=0$, $1$, $\ldots$, $4$; these moduli are close to~$1$. On the other
    hand the only other realizable order with this sign pattern is}

  $$\gamma _1<\alpha _1<\alpha _2<\gamma _2<\gamma _3<\gamma _4~,~~~\,
  {\rm i.~e.}~~~\, NPPNNN~,$$
  {\rm see \cite[Theorems~3 and~4]{KoPuMaDe}, which makes a total of $6$
    orders of moduli realizable with $\Sigma _{3,3,1}$.
    \vspace{1mm}
    
    (2) For $m\geq 1$, $n\geq 1$, one has $k^*(\Sigma _{m,n})=2\min (m,n)-1$,
    see~\cite[Theorem~1 and Corollary~1]{KoPuMaDe}.}
  \end{ex}

Our first result is the following theorem:

\begin{tm}\label{tm1}
  (1) For $d\geq 1$, the only orders realizable with all compatible
  change-preservation
  patterns are $PP\ldots P$ and $NN\ldots N$. The corresponding
  change-preser\-vation patterns are $cc\ldots c$ and $pp\ldots p$.
  \vspace{1mm}
  
  (2) For any $d\geq 1$,
  there exist sign patterns realizable with all compatible orders. For
  $d\geq 5$, there exist sign patterns with $c=2$ which are realizable with
  all ${d\choose 2}$ compatible orders.
  \vspace{1mm}
  
  (3) There exists no sign pattern $\sigma$ such that $k^*(\sigma )=2$.
  \vspace{1mm}
  
  (4) The only sign patterns $\sigma$ with $k^*(\sigma )=3$ are
  the ones of the form $\Sigma _{2,d-1}$, $i_r(\Sigma _{2,d-1})$,
  $i_m(\Sigma _{2,d-1})$ and $i_ri_m(\Sigma _{2,d-1})$.
  \vspace{1mm}
  
  (5) For any $\ell \in \mathbb{N}^*$,
  there exist a degree $d$ and an order $\Omega$ such that
  $l_*(\Omega )=\ell$.
  \end{tm}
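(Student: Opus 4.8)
The plan is to isolate three construction/obstruction tools and then dispatch the five parts with them. The first tool is a scale-separation lemma: if the roots $\rho_1,\dots,\rho_d$ satisfy $|\rho_1|\ll|\rho_2|\ll\cdots\ll|\rho_d|$, then $a_{d-k}$ is dominated by the product of the $k$ largest-modulus roots, so $({\rm sgn}\,a_{d-k})=(-1)^k\prod_{i=1}^k{\rm sgn}(\rho_{(i)})$, where $\rho_{(1)},\dots,\rho_{(d)}$ are listed by decreasing modulus. Taking ratios of consecutive coefficients shows that the $j$th component of the change-preservation pattern is a sign change exactly when $\rho_{(j)}$ is positive. Hence a strongly separated configuration realizes precisely the couple in which the change-preservation pattern, read from the top, is the order $\Omega$ read from its largest modulus; in particular every order is realizable with at least one pattern (its $i_r$-reverse), so $k^*\ge1$ and $l_*\ge1$ always. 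The second tool is perturbation of a multiple root: splitting an $r$-fold root into $r$ simple roots of nearby, arbitrarily ordered moduli leaves the signs of all nonzero coefficients unchanged, hence fixes the sign pattern while letting the order vary. The third tool is the pair of involutions $i_m,i_r$ of Remark~\ref{remimir}, used throughout to halve the casework (reduce to $c\le p$, etc.).

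For part (1) the two displayed orders are realizable: $PP\cdots P$ forces $c=d$ and admits only the pattern $cc\cdots c$, realized by any polynomial with $d$ distinct positive roots, and $NN\cdots N$ is its $i_m$-image. For the converse I take a mixed order $\Omega$ and exhibit one compatible but non-realizable pattern; the natural candidate is the orientation-preserving translate $\tau(\Omega)$ (read $\Omega$ from the smallest modulus and set $P\mapsto c$, $N\mapsto p$ componentwise), which for separated configurations is the ``wrong end'' assignment, and whose non-realizability I reduce, via a factorization lemma and the involutions, to the $d=2$ obstruction of Example~\ref{exd12}, where the sign of the sum of the roots forces the order of the two moduli. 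For part (2), the trivial witnesses $cc\cdots c$ and $pp\cdots p$ (each compatible with a single order) settle the existence claim for every $d$; for the $c=2$ statement I use $Q=(x-1)^2(x+1)^{d-2}$, whose coefficients are nonzero for $d\ge5$ and which has exactly two sign changes. Perturbing its two positive roots near $+1$ and its $d-2$ negative roots near $-1$ places the two positive moduli in any two of the $d$ slots, so the fixed sign pattern of $Q$ is realizable with all ${d\choose 2}$ compatible orders.

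For parts (3) and (4) I first dispose of the one-block patterns: by the cited identity $k^*(\Sigma_{m,n})=2\min(m,n)-1$, a pattern with a single sign change has odd $k^*$, never equal to $2$, and equal to $3$ exactly when $\min(m,n)=2$, i.e. for $\Sigma_{2,d-1}$ and $\Sigma_{d-1,2}=i_r(\Sigma_{2,d-1})$; applying $i_m$ gives the four patterns of part (4). It remains to show that every pattern with $2\le c\le d-2$ has $k^*\notin\{2,3\}$. Here I argue with the deformation picture attached to the second tool: the orders realizable with a fixed pattern are exactly those reachable from one another by adjacent modulus crossings that meet no vanishing coefficient, and a pattern with at least two changes and two preservations admits enough such crossings that, starting from its separated order, at least three further orders are produced, forcing $k^*\ge4$. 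Establishing this lower bound cleanly is the main obstacle of the whole theorem, since it requires controlling exactly which adjacent transpositions are blocked by a coefficient wall.

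Finally, for part (5) I compute $l_*$ for a single well-chosen family. Taking $\Omega=PN\cdots N$ (smallest modulus positive, all others negative), the compatible patterns are the $d$ one-block patterns $\Sigma_{m,d+1-m}$, and using the description of the realizable orders of $\Sigma_{m,n}$ from \cite{KoPuMaDe} one counts exactly how many of them contain $PN\cdots N$; this count is a simple increasing function of $d$ (of the order of $d/2$) that runs through every value in $\mathbb{N}^*$ as $d$ grows, which yields the required $\Omega$ for each prescribed $\ell$. Throughout, the positive constructions rest on the first two tools, while the non-realizability inputs needed for parts (1), (3) and (4) are the delicate ingredients.
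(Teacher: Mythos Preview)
Your proposal has two concrete gaps and one correctable slip.

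\textbf{Part (2).} Your claim that $(x-1)^2(x+1)^{d-2}$ has all coefficients nonzero for $d\ge5$ is false. Writing $h_k$ for the coefficient of $x^k$, one has $h_k=\dfrac{(d-2)!}{k!(d-k)!}\bigl(4k^2-4dk+d(d-1)\bigr)$, which vanishes at $k=(d\pm\sqrt{d})/2$; when $d$ is a perfect square (e.g.\ $d=9,16,\dots$) these are integers and two coefficients are zero. The paper notices this and fixes it by perturbing the multiple roots so that the previously vanishing coefficients become nonzero; you should do the same rather than assert nonvanishing.

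\textbf{Parts (3) and (4).} You yourself flag the hole: the assertion that any pattern with $2\le c\le d-2$ admits at least three further ``modulus-crossings'' unobstructed by a coefficient wall is exactly the content of these parts, and you do not prove it. Controlling which adjacent transpositions cross a coefficient-zero hypersurface is delicate (it is essentially the question of which couples are realizable), so this strategy needs a real argument, not a picture. The paper bypasses this entirely by induction on $d$: write $\sigma=(\sigma_d,\sigma^\dagger,\sigma_0)$; if $\sigma$ is not canonical then at least one of the truncations $(\sigma_d,\sigma^\dagger)$, $(\sigma^\dagger,\sigma_0)$ is not canonical, hence (by induction) is realizable with at least three orders, and each such realization $P_j$ lifts to $\sigma$ via $P_j(x)(x\pm\varepsilon)$ with $\varepsilon>0$ small, producing at least three orders for $\sigma$. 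Part~(4) then pins down the case $k^*=3$ by applying the inductive hypothesis to the truncation and invoking the explicit values of $k^*$ for $\Sigma_{m,n}$ and $\Sigma_{m,n,1}$ from \cite{KoPuMaDe}. This induction is short and avoids any wall analysis; I recommend you adopt it.

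\textbf{Part (1).} Your ``factorization lemma'' reduction to $d=2$ is not spelled out, and it is not obvious how to make it work for an arbitrary mixed order. The paper's argument is a one-liner once canonical patterns are available: for any $\Omega$ with $c,p\ge1$, the two block patterns $\sigma'=c\cdots cp\cdots p$ and $\sigma''=p\cdots pc\cdots c$ are both compatible with $\Omega$, both canonical, and have distinct canonical orders, so $\Omega$ fails at least one of them. Your Part~(5) matches the paper's choice of $\Omega=PN\cdots N$ and is fine.
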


The theorem is proved in Section~\ref{secprtm1}. In Section~\ref{seccanrig}
we recall some notions
and known results and we continue the formulation of the new ones.
In particular, for each of the $6$ classes of non-realizable couples introduced
in Section~\ref{seccanrig} we compare the number 
of couples which it contains
with the number of all compatible couples, see~(\ref{eqtotal}).
In all $6$ cases the limit of their ratio as $d\rightarrow \infty$ is~$0$
(see part (2) of Remarks~\ref{remscanon}, part (2) of Remarks~\ref{remsrigid},
Remark~\ref{remNPd}, Remark~\ref{remc2}, Remark~\ref{remeven} and part (4) of
  Theorem~\ref{tm2parts}).
On the other hand, when considering the cases $d=3$, $4$ and $5$ in
Section~\ref{secd345}, we arrive to the conclusion that it is plausible
to have $\lim _{d\rightarrow \infty}\tilde{r}^*(d)=0$ (see Notation~\ref{notaklr}).
This however cannot be
explained by the presence of the $6$ classes of non-realizable couples, so
for the moment it is not evident what the exhaustive answer to
Question~\ref{quest2} should~be.

We finish
this section by a result of geometric nature. Consider the space of
coefficients $Oa_{d-1}\cdots a_0\cong \mathbb{R}^d$.
The {\em hyperbolicity domain} is the set
of values of $(a_{d-1},\ldots ,a_0)$ for which the corresponding monic
polynomial $Q$ is hyperbolic. The resultant $R:={\rm Res}(Q(x),(-1)^dQ(-x),x)$ 
vanishes
exactly when $Q$ has two opposite roots or a root at $0$. When the
coefficients $a_j$ are real, the polynomials $Q(x)$ and $Q(-x)$ have a
root in common either when $Q(0)=0$ or when $Q$ has two opposite real non-zero
roots or when $Q$ has a pair of purely imaginary roots.

\begin{ex}
  {\rm For $d=1$, $2$ and $3$, one obtains $R=-2a_0$, $R=4a_0a_1^2$ and
    $R=-8a_0(a_2a_1-a_0)^2$ respectively.}
\end{ex}
We denote by $[.]$ the integer part and we set

$$\begin{array}{ll}
  Q^1:=x^{[d/2]}+a_{d-2}x^{[d/2]-1}+a_{d-4}x^{[d/2]-2}+\cdots ~,&\\ \\ 
  Q^2:=a_{d-1}x^{[(d-1)/2]}+a_{d-3}x^{[(d-1)/2]-1}+a_{d-5}x^{[(d-1)/2]-2}+\cdots &
  {\rm and}\\ \\ R_0:={\rm Res}(Q^1(x),Q^2(x),x))~.&  
  \end{array}$$

\begin{tm}\label{tmres}
  (1)  One has $R=(-1)^{[d/2]+1}2^{d-[(d+1)/2]+1}a_0R_0^2$.
  \vspace{1mm}
  
  (2) The quantity $R_0$ is an irreducible polynomial
  in the variables $a_j$.
  \end{tm}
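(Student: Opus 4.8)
The plan is to prove both parts by exploiting the classical factorization of a resultant of the form $\mathrm{Res}(F(x),F(-x),x)$ through the even and odd parts of $F$. Write $Q(x)=E(x^2)+xO(x^2)$, where $E$ collects the even-degree coefficients of $Q$ and $O$ the odd-degree ones; then $(-1)^dQ(-x)=\pm(E(x^2)-xO(x^2))$. The two polynomials $Q(x)$ and $(-1)^dQ(-x)$ therefore have a common root exactly when $E(x^2)=0$ and $xO(x^2)=0$ simultaneously, i.e. when $E$ and $O$ share a root in the variable $t=x^2$, or when $x=0$ (which forces $E(0)=a_0=0$). This is precisely the geometric description given before the theorem. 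Matching the polynomials $Q^1,Q^2$ in the statement to (monic normalizations of) $E$ and $O$ in the variable $t=x^2$, one identifies $R_0$ up to sign and powers of $2$ with $\mathrm{Res}_t(E(t),O(t))$.

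For part (1), I would carry out the standard manipulation: $R=\mathrm{Res}_x(Q(x),(-1)^dQ(-x))$. Using multilinearity of the resultant in the two arguments and the substitution $Q(\pm x)=E(x^2)\pm xO(x^2)$, one rewrites $R$ as $(\pm 1)^{?}\,\mathrm{Res}_x\big(E(x^2)+xO(x^2),\,E(x^2)-xO(x^2)\big)$, then replaces the pair $(E+xO,E-xO)$ by the pair $(E(x^2),xO(x^2))$ via an $\mathrm{SL}_2$ change of the two generators (which only introduces a power of $2$ coming from the determinant of the transformation raised to the appropriate degree). The resultant $\mathrm{Res}_x(E(x^2),xO(x^2))$ then splits: the factor $x$ contributes $E(0)^{\deg}=a_0$ up to sign, and $\mathrm{Res}_x(E(x^2),O(x^2))$ equals $\mathrm{Res}_t(E(t),O(t))^2$ because each root $t_0$ of $E$ gives two roots $\pm\sqrt{t_0}$ of $E(x^2)$. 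Collecting the degrees — $\deg E=[d/2]$, $\deg O=[(d-1)/2]$ — and carefully bookkeeping the signs (the $(-1)^{[d/2]+1}$) and the power of $2$ ($2^{\,d-[(d+1)/2]+1}$) gives the stated formula; the sign and exponent bookkeeping, split into the cases $d$ even and $d$ odd, is the fiddly but routine part.

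For part (2), irreducibility of $R_0$, I would argue as follows. Think of $R_0$ as a polynomial in the coefficient variables $a_{d-1},\dots,a_0$; it depends only on the $a_j$ with $j\equiv d\ (\mathrm{mod}\ 2)$ through $E$ and on those with $j\not\equiv d$ through $O$. Regard $R_0=\mathrm{Res}_t(E(t),O(t))$ as a polynomial in the coefficients of the two \emph{generic} univariate polynomials $E$ and $O$ (of the respective degrees). It is a classical fact that the resultant of two generic univariate polynomials is an irreducible polynomial in the combined set of their coefficients — one standard proof: the resultant vanishes precisely on the hypersurface $\{(E,O): E,O\ \text{have a common root}\}$, which is irreducible because it is the image of the irreducible variety $\{(E,O,t): E(t)=O(t)=0\}$ (an affine bundle over the $t$-line) under a dominant projection, and the resultant is reduced along it (a generic such pair has a single simple common root, so the resultant vanishes to order one). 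Since the coefficients of $E$ and those of $O$ are distinct, algebraically independent subsets of $\{a_0,\dots,a_{d-1}\}$ (here one uses that $Q$ is monic only to fix the leading coefficient of $E$ or $O$, which does not affect irreducibility), $R_0$ is irreducible in $\mathbb{R}[a_0,\dots,a_{d-1}]$ as well.

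The main obstacle I anticipate is not conceptual but combinatorial: getting the exact sign $(-1)^{[d/2]+1}$ and the exact power $2^{\,d-[(d+1)/2]+1}$ right, since these come from (i) the sign picked up in $\mathrm{Res}(F,G)=(-1)^{\deg F\cdot\deg G}\mathrm{Res}(G,F)$ applied several times, (ii) the leading-coefficient factors in the multilinearity formula for resultants, and (iii) the determinant of the $2\times 2$ substitution matrix raised to a degree depending on the parity of $d$. I would handle this by first verifying the formula on the low-degree cases $d=1,2,3$ already computed in the preceding example ($R=-2a_0$, $4a_0a_1^2$, $-8a_0(a_2a_1-a_0)^2$), which pins down the normalization, and then confirming the general parity bookkeeping.
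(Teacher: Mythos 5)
Your plan is sound, but it takes a genuinely different route from the paper on both parts, so let me compare. For part (1) the paper never leaves the Sylvester determinant: it adds row $d+j$ to row $j$, subtracts half of the result from row $d+j$, permutes rows and columns to make the $2d\times 2d$ determinant block-diagonal, extracts the factor $a_0$ by expanding along the first and last columns, and recognizes the two remaining $(d-1)\times(d-1)$ blocks as Sylvester matrices of $2Q^1$ and $-Q^2$; the sign $(-1)^{[d/2]+1}$ and the power of $2$ fall out of these elementary operations. Your route via $Q=E(x^2)+xO(x^2)$ is conceptually cleaner and explains why a perfect square appears, but it concentrates all the difficulty in the conventions you gloss over: the pencil-change identity $\mathrm{Res}(aF+bG,cF+dG)=(ad-bc)^n\mathrm{Res}(F,G)$ holds only for forms of the same \emph{formal} degree $n$, and $xO(x^2)$ (resp.\ $E(x^2)$ for $d$ odd) has actual degree $<d$, so leading-coefficient corrections for degree drops must be tracked; a cleaner shortcut is the product formula $R=\prod_i\bigl(\mp 2\alpha_i O(\alpha_i^2)\bigr)$ over the roots $\alpha_i$ of the monic $Q$, which yields $\pm2^d a_0\,\mathrm{Res}_t(E,O)^2$ directly up to the same bookkeeping. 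Also note that verifying $d\le 3$ only normalizes a constant once you have \emph{proved} the constant has the claimed shape in $d$, so the ``parity bookkeeping'' is the proof itself, not a confirmation. For part (2) the paper argues quite differently, by quasi-homogeneity: it gives $a_{d-1}$ weight $0$, $a_{d-2},a_{d-3}$ weight $1$, etc., observes that $R_0$ is quasi-homogeneous of weight $[(d-1)/2][d/2]$ and exhibits extremal monomials (e.g.\ $\alpha a_0^{[(d-1)/2]}a_{d-1}^{[d/2]}$ and $\beta a_1^{[d/2]}$ for $d$ even) that no product of two lower-weight quasi-homogeneous factors could produce. Your incidence-variety argument is the classical proof that a resultant is irreducible, and you are right not to simply cite the generic statement, since here the leading coefficient of $Q^1$ is specialized to $1$; in the monic case your sketch does go through (the fibre over each $t$ is a product of affine hyperplanes, the vanishing locus of $R_0$ is exactly the image because $R_0=\pm\prod_i Q^2(\tau_i)$ over the roots $\tau_i$ of the monic factor, so no spurious component where $a_{d-1}=0$ appears, and reducedness follows from the simple-common-root computation you indicate). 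In short: the paper's proof is elementary and self-contained (row operations plus a weight count), yours is shorter modulo standard resultant theory and more conceptual, but to be complete it still owes the exact degree-drop and sign accounting in part (1) and the monic-specialization and reducedness details in part~(2).
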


The theorem is proved in Section~\ref{secprtmres}. Properties of
the set $\{ R_0=0\}$ and its pictures for $d\leq 4$
can be found in~\cite{GaKoTa}.

\section{Canonical sign patterns, rigid orders of moduli and further results
  \protect\label{seccanrig}}

\begin{defi}
  {\rm For a given change-preservation pattern, the corresponding
    {\em canonical order} is obtained by reading the pattern from the right
    and by replacing each component $c$ (resp. $p$) by $P$ (resp. by $N$).
    E.~g., the canonical order corresponding to the pattern $ccpcp$ is
    $NPNPP$. This definition allows to define the canonical order corresponding
  to each given sign pattern beginning with~$+$.}
\end{defi}

Each sign or change-preservation pattern is realizable with its canonical order,
see \cite[Proposition~1]{KoSe}.

\begin{defi}\label{deficanonrigid}
  {\rm (1) A sign pattern (or equivalently a change-preservation pattern)
    realizable only with its corresponding
    canonical order is called {\em canonical}.
\vspace{1mm}

(2) If all monic hyperbolic polynomials having a given order of moduli
define one and the
same sign pattern, then the order is called {\em rigid}.}
\end{defi}

\begin{rems}\label{remscanon}
  {\rm (1) It is shown in \cite{KoRM} that canonical are exactly these sign
    patterns which
have no four consecutive signs equal to}

$$(+,+,-,-,)~,~~~\, (-,-,+,+)~,~~~\, (+,-,-,+)~~~\, {\rm or}~~~\, (-,+,+,-)~.$$
{\rm Hence canonical are these change-preservation patterns having no isolated
sign changes and no isolated sign preservations, i.~e. having no three
consecutive components $cpc$ or~$pcp$.

(2) In the proof of Proposition~10 in \cite{KoRM} the set of {\em all}
canonical change-preservation patterns is represented as union of four
subsets, namely of 
patterns beginning with a single $p$ or $c$, patterns
ending by a single $p$ or $c$,
patterns both beginning and ending by a single $p$ or $c$ and patterns whose
two first letters are equal and whose last two letters are also equal.
For $d\geq 100$, the
number of patterns in each of these sets can be majorized by
$2\cdot [d/2]\cdot 2^{d-[0.26d]-1}$. Hence the number of {\em all} canonical
sign-preservation patterns is $\leq \tau (d):=8\cdot [d/2]\cdot 2^{d-[0.26d]-1}$
and for large $d$, 
the number of all non-realizable couples with canonical sign-preservation
patterns is}

$$\leq \tau (d)\sum _{c=0}^d{d\choose c}=8\cdot [d/2]\cdot 2^{2d-[0.26d]-1}<
2^{2d}/\sqrt{\pi d}\sim \chi (d)~,$$
{\rm see Remark~\ref{remchi}; we majorize one of the factors ${d\choose c}$
  in (\ref{eqtotal}) by $\tau (d)$.}
\end{rems}

\begin{rems}\label{remsrigid}
  {\rm (1) It is proved in \cite{KoPMD22} that rigid are the orders of moduli
$PP\ldots P$, $NN\ldots N$ (defining the change-preservation patterns
    $cc\ldots c$ and $pp\ldots p$, the two corresponding couples are
    realizable by any polynomials having
distinct positive or distinct negative roots) and also}

\begin{equation}\label{eqPN}
  P_N:=PNPNPN\ldots ~,~~~\, N_P:=NPNPNP\ldots ~.
\end{equation}
{\rm Each of the
latter two orders (we call them {\em standard}) defines,
depending on the parity of $d$, one of the
sign patterns}

\begin{equation}\label{eqsigmapm}
  \sigma _+:=(+,+,-,-,+,+,-,-,\ldots )~~~\, {\rm or}~~~\,
  \sigma _-:=(+,-,-,+,+,-,-,+,+,\ldots )~.
\end{equation}

{\rm (2) For each fixed degree $d$, there are ${d\choose [d/2]}$ compatible
  couples with the order $P_N$ and ${d\choose [d/2]}$ with the order $N_P$,
  see~(\ref{eqPN}).
  Hence there are $2{d\choose [d/2]}-2$ compatible couples in which the order
  of moduli is rigid (more exactly standard) and which are not realizable,
  and one has 
  $\lim _{d\rightarrow \infty}(2{d\choose [d/2]}-2)/\chi (d)=0$,
  see~(\ref{eqtotal}) and use Stirling's formula.} 
\end{rems}

\begin{defi}
  {\rm We call {\em superposition} 
    of two standard orders of moduli
    $\Omega _1$ and $\Omega _2$ any order obtained as follows. One inserts the
    components of $\Omega _2$ at any places between the components of
    $\Omega _1$ or in front of the first or after the last component of
    $\Omega _1$ by preserving their relative order. Example: the order}

  $$P\bar{N}NP\bar{P}N\bar{N}\bar{P}\bar{N}~~~\, {\rm is~superposition~of}~~~\,
  PNPN~~~\, {\rm and}~~~\,
    NPNPN$$
    {\rm (we overline in this superposition the moduli coming from
      $\Omega _2$; 
      in this example there is more than one way to attribute the moduli of
      roots in the superposition as coming from $\Omega _1$ or
      $\Omega _2$; the superposition of two standard orders is not uniquely
      defined).}
\end{defi}

The following proposition explains how one can obtain new examples of
non-realizable couples on the basis of standard orders.

\begin{prop}\label{propsuper}
  Each superposition
  of two standard orders is realizable only with sign patterns of the
  form

  $$(+,+,?,-,?,+,?,-,\ldots)~,~~~\, (+,?,-,?,+,?,-,\ldots )~~~\,
  {\rm or}~~~\, (+,-,?,+,?,-,?,+,\ldots )$$
  which are the ``products'' of sign patterns $\sigma _+\sigma _+$,
  $\sigma _+\sigma _-$ and $\sigma _-\sigma _-$.
\end{prop}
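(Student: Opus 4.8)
The plan is to deduce the statement from the rigidity of standard orders by factoring $Q$ along the superposition, and then to pin down the sign pattern of the resulting product by a direct computation on coefficients.

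So let $\Omega$ be a superposition of standard orders $\Omega_1,\Omega_2$ of lengths $d_1,d_2$ with $d_1+d_2=d$, and let $Q=\sum_{m=0}^{d}a_mx^m$ be a monic hyperbolic polynomial whose moduli of roots define $\Omega$ (in particular all $a_m\neq 0$, since we speak of its sign pattern). First I would factor $Q=Q_1Q_2$: the definition of superposition partitions the $d$ slots of $\Omega$ --- each filled by a root modulus of $Q$ --- into ``$\Omega_1$-slots'' and ``$\Omega_2$-slots'', and collecting the corresponding linear factors of $Q$ yields monic hyperbolic $Q_1,Q_2$ of degrees $d_1,d_2$ with $Q=Q_1Q_2$, where the moduli of the roots of $Q_i$, being a subset of those of $Q$ in the same relative order, define precisely $\Omega_i$.

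Next I would apply Remarks~\ref{remsrigid}: standard orders being rigid, each of $Q_1,Q_2$ has a well-defined sign pattern, equal to $\sigma_+$ or $\sigma_-$ depending on parity, see~(\ref{eqsigmapm}); in particular all coefficients of $Q_1$ and $Q_2$ are nonzero. The proposition is then reduced to the claim: if $Q_1$ realizes $\sigma_{\varepsilon_1}$ and $Q_2$ realizes $\sigma_{\varepsilon_2}$ with $\varepsilon_1,\varepsilon_2\in\{+,-\}$, the sign pattern of $Q_1Q_2$ is the product pattern $\sigma_+\sigma_+$, $\sigma_+\sigma_-$ or $\sigma_-\sigma_-$ according as $(\varepsilon_1,\varepsilon_2)$ equals $(+,+)$, is mixed, or equals $(-,-)$. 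To prove this I would write $Q_1=\sum_i b_ix^i$, $Q_2=\sum_j c_jx^j$, so that $a_m=\sum_{i+j=m}b_ic_j$ and the $k$-th entry of the sign pattern of $Q_1Q_2$ is ${\rm sgn}(a_{d-k})$. From~(\ref{eqsigmapm}), ${\rm sgn}(b_i)=(-1)^{\lfloor (d_1-i)/2\rfloor}$ when $\varepsilon_1=+$ and ${\rm sgn}(b_i)=(-1)^{\lceil (d_1-i)/2\rceil}$ when $\varepsilon_1=-$ (and similarly for $c_j$ with $d_2$). Putting $i'':=d_1-i$, $j'':=d_2-j$, so $i''+j''=d-m=:k$, the sign of the summand $b_ic_j$ is $(-1)^{f(i'')+g(j'')}$ with $f,g\in\{\lfloor\cdot/2\rfloor,\lceil\cdot/2\rceil\}$ determined by $(\varepsilon_1,\varepsilon_2)$. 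The key point is the elementary fact that, for fixed $k$, the parity of $f(i'')+g(j'')$ is the same for all decompositions $i''+j''=k$ with $i'',j''\geq 0$ exactly when $k$ is odd (case $f=g$) or $k$ is even (case $f\neq g$) --- a one-line verification splitting on the parity of $i''$. In those cases $a_m$ is a sum of nonzero terms all of one sign, hence $a_m\neq 0$, and the value is ${\rm sgn}(a_{d-k})=(-1)^{(k-1)/2}$ for $\varepsilon_1=\varepsilon_2=+$, $(-1)^{(k+1)/2}$ for $\varepsilon_1=\varepsilon_2=-$, and $(-1)^{k/2}$ in the mixed case; together with ${\rm sgn}(a_d)=+$ (monicity) these match exactly the fixed-sign entries of the three product patterns, while at the remaining positions the terms of the sum do not share a common sign and the entry is unconstrained (a ``$?$'').

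I expect the main obstacle to lie in this final computation, though it is bookkeeping rather than a real difficulty: one has to respect that sign patterns are read from the leading coefficient downwards (whence $k=d-m$ and the switch from $\lfloor\cdot\rfloor$ to $\lceil\cdot\rceil$ for $\sigma_-$) and to run the same parity argument uniformly over the four sign combinations $(\varepsilon_1,\varepsilon_2)$. A secondary point needing care is in the factorization step: that the superposition indeed yields $Q=Q_1Q_2$ with $Q_i$ carrying the standard order $\Omega_i$ (clear, since passing to a sub-multiset of moduli preserves their order), and that $Q_1,Q_2$ really have sign patterns at all --- which is part of the rigidity statement of Remarks~\ref{remsrigid}.
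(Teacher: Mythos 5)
Your proposal is correct and follows essentially the same route as the paper: factor the polynomial as $Q_1Q_2$ according to which slots of the superposition come from $\Omega_1$ and $\Omega_2$, invoke the rigidity of the standard orders to fix the sign patterns $\sigma_\pm$ of the factors, and observe that at every second position (plus the leading coefficient) all products $b_ic_j$ contributing to that coefficient share the same sign, forcing the "product" pattern. You merely make explicit the parity bookkeeping that the paper leaves as a one-line observation, and your computed signs agree with the three displayed patterns.
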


\begin{proof}
  Indeed, suppose that in the superposition of standard orders,
the roots coming from the order $\Omega _i$
are roots of a polynomial $T_i$, $i=1$, $2$. Then in the product $T_1T_2$
every second coefficient, the leading coefficient and the constant term are
sums of products of a coefficient of $T_1$ and a coefficient of $T_2$ either
all with opposite or all with same signs, so the corresponding
components of the ``products'' of sign patterns are well-defined.
\end{proof}

\begin{rem}\label{remNPd}
  {\rm The number of letters $N$ in a standard order is equal to the number
    of letters $P$ or differs from the latter by~$1$. Hence in the
    superposition of two standard orders the modulus of
    this difference is majorized by~$2$. Besides, not more than $[d/2]$ of
    the signs of coefficients are not determined by the order of moduli, so 
    the number of non-realizable couples corresponding to
    superpositions of standard orders is less than}

  $$2\left( {d\choose [d/2]}+{d\choose [d/2]-1}+{d\choose [d/2]-2}\right)
  \cdot 2^{[d/2]}<
  6{d\choose [d/2]}\cdot 2^{(d+1)/2}$$
  which is $\sim 12\cdot 2^{3d/2}/\sqrt{\pi d}$
  {\rm (we use Stirling's formula here). At the same time
    $\chi (d)\sim 2^{2d}/\sqrt{\pi d}$
    (see Remark~\ref{remchi}).}
  \end{rem}

There exist other situations in which the order of moduli defines the signs
of part of the coefficients of the polynomial.

\begin{ex}\label{exc2}
  {\rm Consider for $d=8k+2$, $k\in \mathbb{N}^*$, and for $c=2$
    the order of moduli}

  $$\Omega ~:~\gamma _1<\cdots <\gamma _{4k}<\alpha _1<\alpha _2<
  \gamma _{4k+1}<\cdots <\gamma _{8k}~.$$
  {\rm It is realizable only with sign patterns having two sign changes.
    Denote by $U_1$ and $U_2$ monic hyperbolic degree $4k+1$
    polynomials with roots

    $$\begin{array}{ll}
      -\gamma _1~,~~~\, -\gamma _2~,\ldots ~,~~~\, -\gamma _{2k}~,~~~\,
      -\gamma _{4k+1}~,~~~\, -\gamma _{4k+2}~,~\ldots ~,~~~\, 
      -\gamma _{6k}~,~~~\, \alpha _1&{\rm and}\\ \\
      -\gamma _{2k+1}~,~~~\, -\gamma _{2k+2}~,~\ldots ~,~~~\, -\gamma _{4k}~,~~~\, 
      -\gamma _{6k+1}~,~~~\, -\gamma _{6k+2}~,~\ldots ~,~~~\, -\gamma _{8k}~,~~~\,
      \alpha _2&\end{array}$$
    respectively. Hence they define sign patterns of the form
    $\Sigma _{m_i,n_i}$, $i=1$,~$2$.
    According to \cite[Theorem~1]{KoPuMaDe}, if $n_i<m_i$, then
    the polynomial $U_i$ has $\leq 2n_i-2$ moduli of negative roots which are
    $\leq \alpha _i$; if $n_i>m_i$, then it has $\leq 2m_i-2$ moduli of
    negative roots which are $\geq \alpha _i$. Hence one has $n_i\geq k+1$ and
    $m_i\geq k+1$. This implies that the first $k+1$ and the last $k+1$
    coefficients of the product $U_1U_2$ are positive, i.~e. the order of
    moduli $\Omega$ is not realizable with sign patterns $\Sigma _{j_1,j_2,j_3}$
    which do not satisfy the conditions $j_1\geq k+1$ and $j_3\geq k+1$.}
  \end{ex}
    
\begin{rem}\label{remc2}
  {\rm There are ${d\choose 2}^2$ compatible couples with $c=2$ hence less
    than ${d\choose 2}^2$ non-realizable couples concerned by 
    Example~\ref{exc2}. Using the involution $i_m$ (see Remark~\ref{remimir})
    one can give as many such
    examples with $c=d-2$. One has
    $\lim _{d\rightarrow \infty}{d\choose 2}^2/\chi (d)=0$,
    see~(\ref{eqtotal}).}
  \end{rem}

The proposition and theorem that follow describe other situations in which
certain compatible couples are not realizable.

\begin{prop}\label{propeven}
  Suppose that $d$ is even, that the leading monomial and the constant term are
  positive (hence $c$ is even), that all coefficients of odd powers are
  negative and that
  $c<d$. Then there is no modulus of a negative root in any of the intervals
  $(0,\alpha _1)$, $(\alpha _2,\alpha _3)$, $\ldots$,
  $(\alpha _{c-2},\alpha _{c-1})$, $(\alpha _c,\infty )$.
\end{prop}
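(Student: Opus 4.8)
The plan is to work directly with the polynomial $Q(x)=\sum_{j=0}^d a_j x^j$ and exploit the sign structure hypothesized. Write $Q(x)=Q_e(x)+Q_o(x)$, where $Q_e$ collects the even-degree terms and $Q_o$ the odd-degree terms. By hypothesis $d$ is even, $a_d>0$, $a_0>0$, so the leading and trailing coefficients of $Q_e$ are positive; all coefficients of $Q_o$ are negative, so $Q_o(x)<0$ for $x>0$ and $Q_o(x)>0$ for $x<0$. The positive roots of $Q$ are the $\alpha_i$. The key observation is a parity/evaluation trick: for $x>0$ we have $Q(-x)=Q_e(x)-Q_o(x)$, and since $-Q_o(x)>0$ for $x>0$, it follows that $Q(-x)>Q_e(x)$ and also $Q(-x)>Q(x)$ for all $x>0$. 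Thus the polynomial $Q(-x)$, whose positive roots are exactly the $\gamma_j$, dominates $Q(x)$ on the positive half-axis.

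Next I would extract sign information about $Q(x)$ at the points $\alpha_i$ and on the complementary intervals. Since $Q$ is hyperbolic with exactly $c$ positive roots $\alpha_1<\cdots<\alpha_c$ (each simple, as all moduli are distinct) and its leading coefficient is positive, $Q$ is positive on $(\alpha_c,\infty)$, negative on $(\alpha_{c-1},\alpha_c)$, positive on $(\alpha_{c-2},\alpha_{c-1})$, and so on, alternating; because $c$ is even and $Q(0)=a_0>0$, the sign on $(0,\alpha_1)$ is positive, on $(\alpha_1,\alpha_2)$ negative, etc. Therefore $Q$ is strictly positive precisely on the intervals $(0,\alpha_1),(\alpha_2,\alpha_3),\ldots,(\alpha_{c-2},\alpha_{c-1}),(\alpha_c,\infty)$ — exactly the intervals named in the statement — and strictly negative on $(\alpha_1,\alpha_2),\ldots,(\alpha_{c-1},\alpha_c)$.

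Now combine the two facts. On each interval $I$ among $(0,\alpha_1),(\alpha_2,\alpha_3),\ldots,(\alpha_c,\infty)$ we have $Q(x)>0$, and hence $Q(-x)=Q(x)-2Q_o(x)>Q(x)>0$ for every $x\in I$ (using $-Q_o(x)>0$ when $x>0$, which covers all of $I$ since $I\subset(0,\infty)$). Consequently $Q(-x)$ has no zero on $I$, i.e. there is no modulus $\gamma_j$ of a negative root lying in $I$. This is precisely the assertion. I would also note that the endpoints $\alpha_i$ themselves are not moduli of negative roots, since all moduli are assumed distinct, so the closed-versus-open distinction is immaterial for the conclusion as stated.

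The only genuinely delicate point is pinning down the sign of $Q$ on $(0,\alpha_1)$, which relies on $c$ being even together with $Q(0)>0$; this forces the alternation to start with a plus, so the "positive" intervals are exactly the listed ones and not their complements. Everything else is the elementary decomposition $Q(-x)=Q(x)-2Q_o(x)$ and the sign of $Q_o$ on the positive axis, so I expect no real obstacle — the proof should be short. One should double-check that the hypothesis $c<d$ is used only to guarantee that there is at least one negative root (otherwise the statement about "moduli of negative roots" is vacuously true but the intervals still make sense); it plays no other role.
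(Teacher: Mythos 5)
Your argument is correct and is essentially the paper's own proof: the paper likewise observes that $Q(t)>0$ on the listed intervals and that, since all odd-power coefficients are negative, $Q(-t)>Q(t)>0$ for $t>0$, so no modulus of a negative root can lie there. You merely spell out the sign-alternation step (using $c$ even and $Q(0)=a_0>0$) that the paper leaves implicit.
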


\begin{proof}
  Indeed, for a monic hyperbolic polynomial $Q$ satisfying these conditions
  one has
  $Q(t)>0$, if $t$ belongs to any of the mentioned intervals. As all odd
  monomials are with negative coefficients, one has also $Q(-t)>Q(t)$ from
  which the proposition follows.
\end{proof}

\begin{rem}\label{remeven}
  {\rm For $d$ even, the number of sign patterns as defined in
    Proposition~\ref{propeven} is $\leq 2^{d/2}$
    (half of the signs of coefficients are fixed), so if $d$ is large, then
    the number of such non-realizable couples is}

  $$\leq 2^{d/2}\sum _{c=0}^d{d\choose c}=2^{3d/2}<\chi (d)
    \sim 2^{2d}/\sqrt{\pi d}~,$$
    {\rm see Remark~\ref{remchi}.}
    \end{rem}

\begin{tm}\label{tm2parts}
  (1) Suppose that

  \begin{equation}\label{equalphagamma}
    c\leq p~~~\, {\rm and}~~~\,  \alpha _c<\gamma _p,~~~\,
    \alpha _{c-1}<\gamma _{p-1}~,~
    \ldots ~,~~~\, \alpha _1<\gamma _{p-c+1}~.
  \end{equation}
  Then $a_{d-1}>0$. Hence a couple with $a_{d-1}<0$ and order satisfying
  conditions (\ref{equalphagamma}) is not realizable. 
  \vspace{1mm}

  (2) For fixed $d$, the number of orders of moduli satisfying
  conditions (\ref{equalphagamma}) is

  \begin{equation}\label{equalphagamma1}
    T_d^c:={d\choose c}-C_0{d-1\choose c-1}-C_1{d-3\choose c-2}-
    C_2{d-5\choose c-3}-
    C_3{d-7\choose c-4}-\cdots ~, 
  \end{equation}
  where $C_k:={2k\choose k}/(k+1)$ is the $k$th Catalan number.
  \vspace{1mm}

  (3) One has

   \begin{equation}\label{eqT}
     T_d^c={d\choose c}\left( 1-\frac{c}{d-c+1}\right) =
     {d\choose c}\frac{d-2c+1}{d-c+1}~.
   \end{equation}

   (4) For the number $\nu (d)$ of non-realizable couples satisfying
   condition (\ref{equalphagamma}) and with $a_{d-1}<0$ one has
   $\lim _{d\rightarrow \infty}\nu (d)/\chi (d)=0$, see~(\ref{eqtotal}).
\end{tm}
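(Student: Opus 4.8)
The plan is to handle the four parts in turn: part (1) is nearly immediate, parts (2)--(3) amount to a classical lattice‑path count, and part (4) is then a short estimate resting on the first three.

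\textit{Part (1).} I would write the monic hyperbolic polynomial as $Q(x)=\prod_{i=1}^{c}(x-\alpha_i)\prod_{j=1}^{p}(x+\gamma_j)$, so that Vieta's formula gives $a_{d-1}=\sum_{j=1}^{p}\gamma_j-\sum_{i=1}^{c}\alpha_i$. Conditions (\ref{equalphagamma}) provide the injection $k\mapsto p-c+k$ of $\{1,\dots,c\}$ into $\{1,\dots,p\}$ with $\alpha_k<\gamma_{p-c+k}$ for each $k$, whence $\sum_{i=1}^{c}\alpha_i<\sum_{k=1}^{c}\gamma_{p-c+k}\le\sum_{j=1}^{p}\gamma_j$ and therefore $a_{d-1}>0$; the non‑realizability statement is the contrapositive.

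\textit{Parts (2) and (3).} I would encode an order of moduli as a word $w$ in the letters $P,N$ written from the smallest modulus to the largest, so that, counting from the right end of $w$, the $(k+1)$‑st letter $P$ is $\alpha_{c-k}$ and the $(k+1)$‑st letter $N$ is $\gamma_{p-k}$. Then (\ref{equalphagamma}) is equivalent to the statement that, reading $w$ from the right, one never has strictly more letters $P$ than letters $N$ among those read so far; equivalently, the reversed word has at least as many $N$'s as $P$'s in every prefix. This is a ballot‑type condition, and the number of words of length $d$ with $c$ letters $P$ satisfying it can be obtained in two classical ways. First, counting the complementary ("bad") words by first passage: if $2k+1$ is the length of the shortest prefix (of the reversed word) with one more $P$ than $N$, then this prefix is a length‑$2k$ balanced word staying nonnegative (there are $C_k$ of these) followed by one $P$, and the remaining $d-2k-1$ letters are arbitrary with $c-k-1$ of them equal to $P$; summing over $k$ yields $T_d^c={d\choose c}-\sum_{k\ge 0}C_k{d-2k-1\choose c-k-1}$, which is (\ref{equalphagamma1}) once one recalls $C_0=C_1=1$, $C_2=2$, $C_3=5,\dots$. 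Second, reflecting a bad word at its first bad prefix (swapping $P\leftrightarrow N$ afterwards) is a bijection onto all words with $c-1$ letters $N$, of which there are ${d\choose c-1}$; hence $T_d^c={d\choose c}-{d\choose c-1}$, and since ${d\choose c-1}/{d\choose c}=c/(d-c+1)$ this equals ${d\choose c}\,\frac{d-2c+1}{d-c+1}$, which is (\ref{eqT}). (As a by‑product one also gets the Catalan identity $\sum_{k\ge 0}C_k{d-2k-1\choose c-k-1}={d\choose c-1}$.)

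\textit{Part (4).} By part (1) every compatible couple satisfying (\ref{equalphagamma}) with $a_{d-1}<0$ is non‑realizable, so $\nu(d)$ simply counts couples of this type. As $Q$ is monic, $a_{d-1}<0$ means the change‑preservation pattern begins with a sign change; with $c$ sign changes in all there are ${d-1\choose c-1}$ such patterns, each to be matched with one of the $T_d^c$ admissible orders (part (2)), and compatibility forces $1\le c\le[d/2]$. Hence $\nu(d)=\sum_{c=1}^{[d/2]}{d-1\choose c-1}T_d^c=\sum_{c=1}^{[d/2]}{d-1\choose c-1}\bigl({d\choose c}-{d\choose c-1}\bigr)$. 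Setting $a_c:={d-1\choose c-1}{d\choose c}$ and using that ${d-1\choose c-1}\ge{d-1\choose c-2}$ for $c\le[d/2]$, each summand is at most $a_c-a_{c-1}$, so the sum telescopes (with $a_0=0$) to $\nu(d)\le a_{[d/2]}={d-1\choose[d/2]-1}{d\choose[d/2]}$. By Stirling's formula this product is $O(4^d/d)$, while $\chi(d)={2d\choose d}\sim 2^{2d}/\sqrt{\pi d}$ by Remark~\ref{remchi}; hence $\nu(d)/\chi(d)=O(1/\sqrt d)\to 0$. The one step where care is needed is the combinatorial reformulation in part (2) — one must get the orientation of the ballot condition right (it is the $N$'s that must weakly dominate, and in suffixes of $w$) and arrange the first‑passage decomposition so that exactly the Catalan numbers appear; everything afterward is bookkeeping, except that in part (4) replacing the factor $\frac{d-2c+1}{d-c+1}$ by $1$ loses too much (it only gives a ratio bounded away from $0$), so the telescoping, or an equivalent argument using that this factor is small precisely where ${d\choose c}$ is large, is genuinely required.
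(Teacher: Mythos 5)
Your proposal is correct. Parts (1) and (2) coincide with the paper's argument: the same Vieta computation for $a_{d-1}$, and the same first-passage (Catalan) decomposition of the orders violating (\ref{equalphagamma}) read from the largest modulus downward; your reformulation as a ballot condition on suffixes is in fact stated more carefully than in the paper, whose phrase ``orders beginning with $P$'' only makes sense when the order is read from the right. The genuine differences are in parts (3) and (4). For (3) the paper verifies the closed form (\ref{eqT}) by induction on $d$, deriving $T_d^c=T_{d-1}^c+T_{d-1}^{c-1}$ from Pascal's rule applied inside (\ref{equalphagamma1}); you instead count the bad words directly by the reflection principle, getting $T_d^c={d\choose c}-{d\choose c-1}$, which gives (\ref{eqT}) at once and yields the Catalan identity behind (\ref{equalphagamma1}) as a by-product -- a shorter and more conceptual route. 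For (4) the paper bounds $\nu (d)$ by $\sum _c{d\choose c}T_d^c$, rewrites everything through the ratios $h_{k,m}$, splits the sum at $m=[sk]$ and lets $s\to 0$ after $k\to \infty$, treating even and odd $d$ separately; you use the exact count $\nu (d)=\sum _{c\le [d/2]}{d-1\choose c-1}T_d^c$ together with the closed form and telescope, obtaining the explicit bound $\nu (d)\le {d-1\choose [d/2]-1}{d\choose [d/2]}=O(4^d/d)$, hence $\nu (d)/\chi (d)=O(1/\sqrt{d})$ -- a cleaner estimate, uniform in the parity of $d$, which gives a rate of decay rather than only the limit. Your closing remark that the crude bound $T_d^c\le {d\choose c}$ would not suffice is accurate (it leaves the ratio bounded away from $0$), and the steps I checked in detail (the suffix/ballot equivalence with (\ref{equalphagamma}), the reflection bijection onto words with $c-1$ letters $N$, and the inequality ${d-1\choose c-1}\ge {d-1\choose c-2}$ for $c\le [d/2]$ used in the telescoping) are all sound.
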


\begin{rem}
  {\rm The quantity $T_d^c{d-1\choose c}$ (resp. ${d\choose c}{d-1\choose c}$)
    is the number of couples in which the change-preservation pattern begins
    with $p$ and the order satisfies condition (\ref{equalphagamma}) (resp.
    of all compatible couples in which the change-preservation pattern begins
    with $p$). For $c$ fixed, one has
    $\lim _{d\rightarrow \infty}T_d^c/{d\choose c}=1$. Indeed, this is the ratio of
    two degree $c$ polynomials in $d$ whose leading coefficients
    equal $1/c!$.}  
  \end{rem}

\begin{proof}[Proof of Theorem~\ref{tm2parts}]
  Part (1). Indeed,
  $a_{d-1}=\gamma _1+\cdots +\gamma _p-\alpha _1-\cdots -\alpha _c>0$.
  \vspace{1mm}

  Part (2). The first term in the right-hand side of
  (\ref{equalphagamma1}) is the number of all
  orders with $c$ components equal to $P$. The second term is the number of
  orders beginning with $P$; they do not satisfy conditions
  (\ref{equalphagamma}). The third (resp. the fourth) term is the number
  of orders beginning with $NPP$ (resp. with $NPNPP$ or $NNPPP$). The fifth
  term is the number of orders beginning with $NPNPNPP$, $NNPPNPP$, $NPNNPPP$,
  $NNPNPPP$ or $NNNPPPP$ etc.

  That is, for $k\geq 2$,
  the $k$th term is the number of
  orders among whose first $2k-1$ components there are $k$ letters $P$ and
  which are not included in one of the previous terms (excluding the initial
  ${d\choose c}$). In an equivalent way, the $k$th term contains orders among
  whose $2k-2$ first components there are exactly $k-1$ letters $P$ and for
  $s\leq 2k-2$, among their $s$ first letters there are not less letters $N$
  than letters $P$. Hence this is the number of lattice paths in the plane
  with possible steps $(1,1)$ and $(1,-1)$ going from $(0,0)$ to $(2k-2,0)$
  which do not descend below the abscissa-axis. The number of such paths
  is~$C_{k-1}$.
  \vspace{1mm}
  
  Part (3). Formula (\ref{eqT}) can be proved by
induction on $d$. For $d=1$ and $2$ and for $c\leq d$, it is to be checked
directly. Suppose that it is true for $d\leq d_0$. Then for $d=d_0+1$, one
applies to any binomial coefficient in the formula the well-known
equality ${n\choose k}={n-1\choose k-1}+{n-1\choose k}$. Thus

$$\begin{array}{rcl}
  T_d^c=T_{d-1}^c+T_{d-1}^{c-1}&=&{d-1\choose c}\left( 1-\frac{c}{d-c}\right) +
  {d-1\choose c-1}\left( 1-\frac{c-1}{d-c+1}\right) \\ \\ &=&
  {d\choose c}\left( 1-\frac{c}{d-c+1}\right) ~,
  \end{array}$$
where the rightmost equality is to be checked straightforwardly.
\vspace{1mm}

Part (4). Suppose that $d=2k$, $k\in \mathbb{N}^*$. Set

$$h_{k,m}:=\frac{k(k-1)\cdots (k-m+1)}{(k+1)(k+2)\cdots (k+m)}~,~~~\,
{\rm so}~~~\,
{2k\choose k-m}={2k\choose k}h_{k,m}~.$$
For $k$ fixed, the sequence $h_{k,m}$ is decreasing in $m$; one has $h_{k,0}=1$.
The sum $\sum _{c=0}^d{d\choose c}^2$ of all compatible couples equals
$\tilde{b}:={2k\choose k}^2(1+2\sum _{m=1}^{k}h_{k,m}^2)$. The number
$\nu (d)=\nu (2k)$ is bounded by

$$\sum _{c=0}^k{2k\choose c}T_{2k}^c=
\sum _{m=0}^k{2k\choose k-m}T_{2k}^{k-m}={2k\choose k}^2\sum _{m=0}^k
\frac{2m+1}{k+m+1}h_{k,m}^2$$
(we remind that the orders
satisfying condition (\ref{equalphagamma}) are defined under the assumption
that $c\leq p$). Fix $s\in (0,1)$. Then

  $$g_1:=\sum _{m=0}^{[sk]}\frac{2m+1}{k+m+1}h_{k,m}^2\leq \frac{2[sk]+1}{k+[sk]+1}
    \sum _{m=0}^{[sk]}h_{k,m}^2~.$$
It is clear that $g_1<\frac{2[sk]+1}{k+[sk]+1}\sum _{m=0}^{k}h_{k,m}^2$, so

\begin{equation}\label{equs1}
{2k\choose k}^2g_1<\frac{2[sk]+1}{k+[sk]+1}\tilde{b}~.
  \end{equation}
For large values of $k$ and for $m\geq [sk]+1$, the quantity $h_{k,m}$
is majorized by

$$\frac{(k-[sk/2])\cdots (k-m+1)}{(k+[sk/2]+1)\cdots (k+m)}\leq
\left( \frac{k-[sk/2]}{k+[sk/2]+1}\right) ^{[sk]-[sk/2]}
\left( \frac{k-[sk]+1}{k+[sk]}\right) ^{m-[sk]-1}~.$$
Set $u:=\frac{k-[sk/2]}{k+[sk/2]+1}$ and $v:=\frac{k-[sk]+1}{k+[sk]}$.
Hence

 $$ \begin{array}{cclcl}
    g_2&:=&\sum _{m=[sk]+1}^{k}h_{k,m}^2&<&
    u^{[sk]-[sk/2]}\sum _{m=[sk]+1}^{\infty}v^{m-[sk]-1}\\ \\
    &=&\frac{u^{[sk]-[sk/2]}}{1-v} 
    &=&u^{[sk]-[sk/2]}\frac{k+[sk]}{2[sk]+1}~.\end{array}$$
The latter quantity tends to $0$ as $k\rightarrow \infty$, therefore
$\lim _{k\rightarrow \infty}{2k\choose k}^2g_2/\tilde{b}=0$. As
$g_3:=\sum _{m=[sk]+1}^{k}\frac{2m+1}{k+m+1}h_{k,m}^2<g_2$, one obtains

\begin{equation}\label{equs3}
\lim _{k\rightarrow \infty}{2k\choose k}^2g_3/\tilde{b}=0~.
\end{equation}
One has $\nu (d)\leq {2k\choose k}^2(g_1+g_3)$.
The coefficient of $\tilde{b}$ in (\ref{equs1}) can be made smaller than
any positive number by choosing $s$ small enough. Therefore inequality
(\ref{equs1}) and equality (\ref{equs3}) imply part (4) of
Theorem~\ref{tm2parts}
for $d$ even.
\vspace{1mm}

If $d=2k+1$, $k\in \mathbb{N}^*$, then one can prove part (4) in much the
same way, so we point out only some technical differences. One sets

$$h_{k,m}:=\frac{k(k-1)\cdots (k-m+1)}{(k+2)(k+3)\cdots (k+m+1)}~,~~~\, {\rm so}
~~~\, {2k+1\choose k-m}={2k+1\choose k}h_{k,m}~,$$
and $\tilde{b}=2{2k+1\choose k}^2(1+\sum _{m=1}^kh_{k,m}^2)$. The definitions of
the quantities $g_1$, $g_2$ and $g_3$ are the same, but with respect to the
new formula for $h_{k,m}$. One sets
$u:=\frac{k-[sk/2]}{k+[sk/2]+2}$ and $v:=\frac{k-[sk]+1}{k+[sk]+1}$.
Inequality (\ref{equs1}) and equality (\ref{equs3}) remain the same.
\end{proof}

\section{Realizable couples for $d=3$, $4$ and $5$\protect\label{secd345}}



We give the exhaustive answer to Question~\ref{quest2} for $d=3$, $4$ and
$5$; for $d=1$ and~$2$, this answer is given by Example~\ref{exd12};
one finds that $\tilde{r}^*(1)=1$ and $\tilde{r}^*(2)=2/3$, see
Notation~\ref{notaklr}.
It is clear from part (1) of Theorem~\ref{tm1} that $\tilde{r}^*(1)<1$
for $d>1$. We make use
of the involution $i_m$, see Remark~\ref{remimir}, to consider only the cases
with $a_{d-1}>0$. For $d=3$, we give the list of sign patterns and
(non)-realizable orders
in the following table:

$$\begin{array}{ccc}{\rm sign~pattern}&{\rm realizable~orders}&
  {\rm non-realizable~orders}\\ \\ (+,+,+,-)&PNN&NPN~,~NNP\\ \\
  (+,+,-,-)&PNN~,~NPN~,~NNP&\\ \\
  (+,+,+,+)&NNN&\\ \\ (+,+,-,+)&PPN&NPP~,~PNP~.\end{array}$$
Thus $\tilde{r}^*(3)=3/5$. The (non)-realizability of these cases can be
justified using the results in~\cite{KoPuMaDe}. For $d=4$,
we list the sign patterns by the value of~$c$:

$$\begin{array}{cccc}c&{\rm sign~pattern}&{\rm realizable~orders}&
  {\rm non-realizable~orders}\\ \\
  0&(+,+,+,+,+)&NNNN&\\ \\ 1&(+,+,+,+,-)&PNNN&NPNN,~NNPN,~NNNP\\ \\ 
  &(+,+,+,-,-)&PNNN,~NPNN,~NNPN&NNNP\\ \\ 
  &(+,+,-,-,-)&NPNN,~NNPN,~NNNP&PNNN\\ \\
  2&(+,+,-,+,+)&NPPN&NNPP,~NPNP,~PNNP\\
  &&&PNPN~,~PPNN\\ \\
  &(+,+,-,-,+)&PNPN,~NPPN,&NPNP,~NNPP\\ &&
  PPNN,~PNNP&\\ \\
  &(+,+,+,-,+)&PPNN&PNPN,~NPPN,~NPNP\\
  &&&PNNP,~NNPP\\ \\ 
3&(+,+,-,+,-)&PPPN&NPPP,~PNPP,~PPNP
\end{array}$$
Hence $\tilde{r}^*(4)=3/7$. The (non)-realizability of the cases can be
proved using the results
in~\cite{KoPuMaDe}. The involution $i_m$ transforms the sign pattern with $c=3$
into $(+,-,-,-,-)$. We illustrate the realizability of 
the cases with the sign pattern $(+,+,-,-,+)$ by examples:

$$\begin{array}{ll}
  PNPN&(x+1.3)(x-1.2)(x+1.1)(x-1)=\\ &x^4+0.2x^3-2.65x^2-0.266x+1.716\\ \\ 
  NPPN&(x+2)(x-1)(x-0.9)(x+0.8)=\\ &x^4+0.9x^3-2.82x^2-0.52x+1.44\\ \\
  PPNN&(x+2)(x+1.1)(x-1)(x-0.1)=\\ &x^4+2x^3-1.11x^2-2.11x+0.22\\ \\
  PNNP&(x-2)(x+1.9)(x+1)(x-0.8)=\\ &x^4+0.1x^3-4.62x^2-0.68x+3.04~.
\end{array}$$
For $d=5$, we show for each sign pattern only 
the number of realizable and the total number of orders compatible with
the sign pattern and in some cases the realizable orders. To justify the
tables below one can use the results in \cite{KoPuMaDe} and~\cite{KoRM}.
There are the following canonical sign patterns:

$$\begin{array}{lllllll}
  c=0&(+,+,+,+,+,+)&1/1&&c=1&(+,+,+,+,+,-)&1/5\\ \\
  c=2&(+,+,-,+,+,+)&1/10&&c=3&(+,+,-,+,-,-)&1/10\\
  &(+,+,+,-,+,+)&1/10&&&(+,+,+,-,+,-)&1/10\\ &(+,+,+,+,-,+)&1/10\\ \\
  c=4&(+,+,-,+,-,+)&1/5
\end{array}
$$
The remaining sign patterns are:

$$\begin{array}{llcl}c=1&(+,+,+,+,-,-)&PNNNN~,&3/5\\
  &&NPNNN~,~NNPNN&\\ &(+,+,+,-,-,-)&&5/5\\
  &(+,+,-,-,-,-)&NNPNN~,&3/5\\ &&NNNPN~,~NNNNP& \\ \\ 
  c=2&(+,+,-,-,-,+)&PPNNN~,&5/10\\ &&PNPNN~,~PNNPN~,&\\
  &&PNNNP~,~NPPNN&\\ &(+,+,+,-,-,+)&PPNNN~,~PNPNN~,&4/10\\
  &&PNNPN~,~NPPNN&\\ &(+,+,-,-,+,+)&&10/10\\ \\
  c=3&(+,+,-,+,+,-)&&5/10\\
  &(+,+,-,-,+,-)&&4/10
\end{array}$$
Therefore $\tilde{r}^*(5)=47/126$. The two latter sign patterns (with $c=3$)
are obtained from two of the sign
patterns with $c=2$ via the involution $i_mi_r$.
%
The realizability of the sign pattern $(+,+,-,-,+,+)$ with all possible orders
results from

$$(x+1)^3(x-1)^2=x^5+x^4-2x^3-2x^2+x+1~.$$
Indeed, by perturbing the triple root at $-1$ and the double root at $1$ one
obtains polynomials with the same sign pattern and with any order of the
moduli of the roots, see the proof of part (2) of Theorem~\ref{tm1}.

\begin{rem}\label{remr}
  {\rm We obtained the following sequence for the values of the quantity
$\tilde{r}^*(d)$: $1$, $2/3$, $3/5$, $3/7$, $47/126$, $\ldots$. One could
    conjecture that the sequence is decreasing. For the sequence of the ratios
    of two consecutive terms one gets}

  $$2/3=0.66\ldots ~,~~~\, 9/10=0.9~,~~~\, 5/7=0.71\ldots ~,~~~\,
  47/54=0.87\ldots ~.$$
  {\rm It seems that the even and the odd terms form two adjacent sequences
    and that $\lim _{d\rightarrow \infty}\tilde{r}^*(d)=0^+$.}
  \end{rem}

\section{Proof of Theorem~\protect\ref{tm1}\protect\label{secprtm1}}

Part (1). As already mentioned, for the orders $PP\ldots P$ and
$NN\ldots N$, the only
change-preservation patterns compatible with them are $cc\ldots c$ and
$pp\ldots p$ respectively and the corresponding couples are realizable.

Suppose that for given $c>0$ and $p>0$, the order of moduli $\Omega$ is
realizable with all compatible change-preservation patterns. Then,
in particular, it is realizable with the sign patterns
$\sigma '$ and $\sigma ''$,
    where $\sigma '$ has all its $c$ sign changes at the
    beginning followed by its $p$ sign preservations and vice-versa for
    $\sigma ''$. However, the sign patterns $\sigma '$ and $\sigma ''$ are
    canonical hence realizable only with their respective canonical orders
    $\Omega '$ and $\Omega ''$, see Definition~\ref{deficanonrigid}.
    As $\Omega '\neq \Omega ''$, the order
    $\Omega$ is not realizable with both $\sigma '$ and $\sigma ''$.
    \vspace{1mm}
    
    Part (2). For $d\geq 1$, the all-pluses sign pattern is realizable with
    its only compatible order $N\ldots N$.
    To prove the rest of part (2) for $d\geq 5$
    we construct sign patterns with $c=2$ which are
    realizable with all compatible orders. Consider the polynomial

  $$\begin{array}{ccl}
    (x+1)^{d-2}(x-1)^2&=&\left( \sum _{k=0}^{d-2}{d-2\choose k}x^k\right)
    (x^2-2x+1)\\ \\
    &=&\sum _{k=0}^{d}h_kx^k~,~~~\,
    h_k:={d-2\choose k}-2 {d-2\choose k-1}+
    {d-2\choose k-2}~.\end{array}$$
    It has two sign changes (so its sign pattern is of the form
    $\Sigma _{i_1,i_2,i_3}$). To understand in which positions they are 
  one observes that

  $$h_k=\frac{(d-2)!}{k!(d-k)!}(4k^2-4dk+d(d-1))~,$$
  so $h_k=0$ if and only if $k=k_{\pm}:=(d\pm \sqrt{d})/2$. If $d$ is not an
  exact square, then the sign changes occur between the powers $x^{s_{\pm}}$ and
  $x^{s_{\pm}+1}$, where $s_{\pm}<k_{\pm}<s_{\pm}+1$. If $d$ is an exact square, then
  the coefficients of $x^{k_{\pm}}$ are~$0$. 

  Suppose that $d$ is not an exact square. One can perturb the roots of the
  polynomial by keeping the sign
  pattern the same. If $d$ is an exact square, then one can perturb them so that
  all coefficients become non-zero. One can choose such a perturbation for any
  possible order of the moduli of roots which proves part~(2). One can
  observe that as $k_+-k_-=\sqrt{d}$, for
  $d\geq 5$, there are at least two consecutive negative coefficients (i.~e.
  $i_2\geq 2$) and
  the sign pattern is not canonical.
  \vspace{1mm}

  We prove part (3) of the theorem by induction on $d$.
  For $d=1$, $2$ and $3$, the claim is to be checked straightforwardly,
  see Example~\ref{exd12} and Section~\ref{secd345}.
  Suppose that $d\geq 4$ and that $\sigma$ is not canonical.
  Represent $\sigma$ in the form $(\sigma _d,\sigma ^{\dagger},\sigma _0)$,
  where $\sigma _d$ and $\sigma _0$ are its first and last components.
  Then at least one
  of the sign patterns $(\sigma _d,\sigma ^{\dagger})$ and
  $(\sigma ^{\dagger},\sigma _0)$ contains an isolated sign change or an
  isolated sign
  preservation. Suppose that this is $(\sigma _d,\sigma ^{\dagger})$. Then
  $(\sigma _d,\sigma ^{\dagger})$ is not canonical
  and hence is realizable by at least
  three orders by polynomials $P_j$. This means that $\sigma$ is also realizable
  by at least three orders defined by the roots of the polynomials
  $P_j(x)(x\pm \varepsilon )$, where $\varepsilon >0$ is small enough and the
  sign is~$+$ (resp.~$-$) if the last two components of $\sigma$ are equal
  (resp. are different).
  \vspace{1mm}
  
  Part (4) is also proved by induction on $d$. For $d\leq 4$,
  it is to be checked directly. Suppose that $d\geq 5$.
  If neither of the sign patterns
  $(\sigma _d,\sigma ^{\dagger})$ and
  $(\sigma ^{\dagger},\sigma _0)$ contains an isolated sign change or
  sign preservation, then this is the case of $\sigma$ as well, so $\sigma$ is
  canonical and $k^*(\sigma )=1$ -- a contradiction. Hence at least one
  of these sign patterns is not canonical. Without loss of generality we
  suppose that this is $(\sigma _d,\sigma ^{\dagger})$ (otherwise we apply the
  involution~$i_r$). Hence $k^*((\sigma _d,\sigma ^{\dagger}))\geq 3$,
  so $k^*((\sigma _d,\sigma ^{\dagger}))=3$, otherwise similarly to 
  the proof of part (3) we obtain that $k^*(\sigma )>3$.
  Applying if necessary the involution $i_m$ we assume
  that $(\sigma _d,\sigma ^{\dagger})=\Sigma _{2,d-2}$ or $\Sigma _{d-2,2}$.
  In the first case one has $\sigma =\Sigma _{2,d-1}$. Indeed, if
  $\sigma =\Sigma _{2,d-2,1}$, then $k^*(\sigma )>3$, see
  \cite[Theorems 3 and~4]{KoPuMaDe}. In the second case
  either $\sigma =\Sigma _{d-2,3}$ and $k^*(\sigma )=5$ (see
  \cite[Theorem 1]{KoPuMaDe}) or
  $\sigma =\Sigma _{d-2,2,1}$ and $k^*(\sigma )=4$ (see
  \cite[Theorems 3 and~4]{KoPuMaDe}).
  \vspace{1mm}

  Part (5). For $d$ even, the order
    $\Omega :=PNN\ldots N$ is realizable exactly with the sign patterns
  $\Sigma _{m,n}$, $m+n=d+1$, $n<m$, see \cite[Theorem~1]{KoPuMaDe},
  so $\ell _*(\Omega )=d/2$.

\section{Proof of theorem~\protect\ref{tmres}\protect\label{secprtmres}}

\begin{proof}[Proof of part (1)]

   A) For a vector-row $v$ of length $2d$ we denote by $v_{\ell}$ the vector-row
  obtained from $v$ by shifting $v$ by $\ell$ positions to the right (the
  rightmost $\ell$ positions are then lost and the leftmost $\ell$ positions
  are filled with zeros). We represent $R$ as determinant of the Sylvester
  $2d\times 2d$-martix 
  of the polynomials $Q(x)$ and $(-1)^dQ(-x)$ whose first and $(d+1)$st
  row equal respectively

  $$\begin{array}{lrrrrrrrrrr}
    u:=(~1&a_{d-1}&a_{d-2}&a_{d-3}&a_{d-4}&\ldots &a_1&a_0&0&\ldots &0~)\\ \\ 
    {\rm and}&&&&&&&&&&\\ \\ 
    w:=(~1&-a_{d-1}&a_{d-2}&-a_{d-3}&a_{d-4}&\ldots &(-1)^{d-1}a_1&(-1)^da_0&0&
    \ldots &
    0~)~;\end{array}$$
  its second and $(d+2)$nd rows equal $u_1$ and $w_1$, its third and $(d+3)$rd
  rows equal $u_2$ and $w_2$ etc. For $d=2$ and $d=3$, we obtain the
  determinants

  $$\left| \begin{array}{rrrr}
    1&a_1&a_0&0\\ 0&1&a_1&a_0\\ 1&-a_1&a_0&0\\ 0&1&-a_1&a_0
  \end{array}\right|~~~\, {\rm and}~~~\, \left| \begin{array}{rrrrrr}
    1&a_2&a_1&a_0&0&0\\ 0&1&a_2&a_1&a_0&0\\ 0&0&1&a_2&a_1&a_0\\
    1&-a_2&a_1&-a_0&0&0\\ 0&1&-a_2&a_1&-a_0&0\\ 0&0&1&-a_2&a_1&-a_0
    \end{array}\right| ~.$$
    \vspace{1mm}
  
  B) For $j=1$, $\ldots$, $d$, we add the $(j+d)$th row to the $j$th row. Hence
  the first row of the determinant is now

  $$\begin{array}{lrrrrrrrrrr}
    g:=(~2&0&2a_{d-2}&0&2a_{d-4}&\ldots &2a_{d-2[d/2]}&0&0&\ldots &0~)
  \end{array}$$
and the next $d-1$ rows equal $g_j$, $j=1$, $\ldots$, $d-1$. 
  After this one
  subtracts the $k$th row multiplied by $1/2$ from the $(d+k)$th one,
  $k=1$, $\ldots$, $d$. Hence the $(d+1)$st row equals

  $$\begin{array}{lrrrrrrrrrr}
    h:=(~0&-a_{d-1}&0&-a_{d-3}&0&\ldots &-a_{d-2[(d+1)/2]+1}&0&0&\ldots &0~)
  \end{array}$$
  and the next $d-1$ rows are of the form $h_j$, $j=1$, $\ldots$, $d-1$. For
  $d=2$ and $d=3$, this gives

$$\left| \begin{array}{rrrr}
    2&0&2a_0&0\\ 0&2&0&a_0\\ 0&-a_1&0&0\\ 0&0&-a_1&0
  \end{array}\right|~~~\, {\rm and}~~~\, \left| \begin{array}{rrrrrr}
    2&0&2a_1&0&0&0\\ 0&2&0&2a_1&0&0\\ 0&0&2&0&2a_1&0\\
    0&-a_2&0&-a_0&0&0\\ 0&0&-a_2&0&-a_0&0\\ 0&0&0&-a_2&0&-a_0
    \end{array}\right| ~.$$
  \vspace{1mm}

  C) We permute the rows of the determinant (which does not change
  the determinant up to a sign). In the first $d-[d/2]$ positions we place the
  first, third, fifth etc. rows, in the next $[d/2]$ positions
  the $(d+2)$nd, $(d+4)$th, $(d+6)$th etc. rows, in the next $[d/2]$ positions
  the second, fourth, sixth etc. rows and in the last $d-[d/2]$ positions
  the $(d+1)$st, $(d+3)$rd, $(d+5)$th etc. rows. After this permutation
  the first $d$ rows have non-zero entries only in the odd and the last
  $d$ rows have non-zero entries only in the even columns.

  Then we permute the columns of the determinant placing the odd
  columns in
  the first $d$ positions and the even columns in the last $d$ positions by
  preserving the relative order of the even and odd columns. For
  $d=2$ and $d=3$, the result is 

$$\left| \begin{array}{rrrr}
    2&2a_0&0&0\\ 0&-a_1&0&0\\ 0&0&2&2a_0\\ 0&0&-a_1&0
  \end{array}\right|~~~\, {\rm and}~~~\, \left| \begin{array}{rrrrrr}
    2&2a_1&0&0&0&0\\ 0&2&2a_1&0&0&0\\ 0&-a_2&-a_0&0&0&0\\ 0&0&0&2&2a_1&0\\
    0&0&0&-a_2&-a_0&0\\ 0&0&0&0&-a_2&-a_0
  \end{array}\right| ~.$$
  For any $d\geq 2$, the determinant is now block-diagonal, with two
  diagonal blocks $d\times d$. For $d=4$, these blocks are

  $$\left| \begin{array}{rrrr}
    2&2a_2&2a_0&0\\ 0&2&2a_2&2a_0\\
  0&-a_3&-a_1&0\\ 0&0&-a_3&-a_1\end{array}\right| ~~~\, {\rm and}~~~\,
  \left| \begin{array}{rrrr}
     2&2a_2&2a_0&0\\ 0&2&2a_2&2a_0\\ -a_3&-a_1&0&0\\ 0&-a_3&-a_1&0
  \end{array}\right| ~.$$
  The first and the $(d+1)$st rows equal respectively

  $$\begin{array}{lrrrrrrrrrr}
    \tilde{g}:=(~2&&2a_{d-2}&&2a_{d-4}&\ldots &2a_{d-2[d/2]}&0&0&\ldots &0~)
  \end{array}$$
  and $\tilde{g}_d$. The first $d-[d/2]$ rows equal
  $\tilde{g}$, $\tilde{g}_1$, $\tilde{g}_2$, $\ldots$, $\tilde{g}_{d-[d/2]-1}$
  while the rows with indices $d+1$, $d+2$, $\ldots$, $d+[d/2]$ are
  $\tilde{g}_d$, $\tilde{g}_{d+1}$, $\ldots$, $\tilde{g}_{d+[d/2]-1}$. The
  $(d-[d/2]+1)$st row equals

  $$\begin{array}{lrrrrrrrrrr}
    \tilde{h}:=(~0&-a_{d-1}&-a_{d-3}&-a_{d-5}&\ldots &-a_{d-2[(d+1)/2]+1}&0&0&
    \ldots &0~)~.\end{array}$$
  The next $[d/2]-1$ rows are $\tilde{h}_j$, $j=1$, $\ldots$, $[d/2]-1$.
  The last $d-[d/2]$ rows equal $\tilde{h}_k$, $k=d-1$, $\ldots$,
  $2d-[d/2]-2$.

  The total number of transpositions of rows and columns is even, so the sign
  of the determinant does not change.
  \vspace{1mm}
  
  D) One develops the determinant thus obtained w.r.t. its first and then
  w.r.t. its last column. For $d$ even (resp. for $d$ odd), this yields
  $-4a_0\Delta$ (resp. $-2a_0\Delta$), where the
  $(2d-2)\times (2d-2)$-determinant $\Delta$ is block-diagonal, with
  two diagonal blocks $(d-1)\times (d-1)$ each of which is the Sylvester
  matrix of the polynomials $2Q^1$ and $-Q^2$. This implies part~(1)
  of the theorem.
\end{proof}

\begin{proof}[Proof of part (2)]
One can assign quasi-homogeneous weights
    to the variables $a_j$ as follows: $0$ to $a_{d-1}$, $1$ to $a_{d-2}$ and
    $a_{d-3}$, $2$ to $a_{d-4}$ and $a_{d-5}$, $3$ to $a_{d-6}$ and $a_{d-7}$ etc.,
    in accordance with the fact that $a_{d-2}$, $a_{d-4}$, $\ldots$ and
    $a_{d-3}/a_{d-1}$, $a_{d-5}/a_{d-1}$, $\ldots$ are up to a sign elementary
    symmetric polynomials of the roots of $Q^1$ and $Q^2$. Hence $R_0$ is a
    quasi-homogeneous polynomial of weight $d_0:=[(d-1)/2][d/2]$.
    For $d$ even (resp. for $d$ odd), it
    contains monomials $\alpha a_0^{[(d-1)/2]}a_{d-1}^{[d/2]}$ and $\beta a_1^{[d/2]}$,
    $\alpha \neq 0\neq \beta$
    (resp. $\gamma a_1^{[(d-1)/2]}a_{d-1}^{[d/2]}$ and $\delta a_0^{[d/2]}$,
    $\gamma \neq 0\neq \delta$), all other monomials containing factors
    $a_0^k$ and $a_1^s$ only with $k<[(d-1)/2]$ and $s<[d/2]$ (resp.
    with $k<[d/2]$ and $s<[(d-1)/2]$). Hence $R_0$
    cannot be the product of two quasi-homogeneous polynomials
    of weights $b_1$ and $b_2$, $0<b_1,b_2<d_0$.

  \end{proof}

\end{document}